\documentclass[pdflatex,sn-mathphys-num]{sn-jnl}

\usepackage{amsmath,amssymb,amsfonts}
\usepackage{mathtools}
\usepackage[nameinlink,noabbrev]{cleveref}

\crefname{theorem}{Theorem}{Theorems}
\crefname{proposition}{Proposition}{Propositions}
\crefname{lemma}{Lemma}{Lemmas}
\crefname{remark}{Remark}{Remarks}
\crefname{section}{Section}{Sections}

\numberwithin{equation}{section}

\theoremstyle{thmstyleone}
\newtheorem{theorem}{Theorem}[section]
\newtheorem{proposition}[theorem]{Proposition}
\newtheorem{lemma}[theorem]{Lemma}

\theoremstyle{thmstyletwo}
\newtheorem{remark}[theorem]{Remark}

\newcommand{\Pf}{\operatorname{Pf}}
\newcommand{\diag}{\operatorname{diag}}
\newcommand{\one}{\mathbf{1}}
\newcommand{\T}{{\mathsf T}}
\newcommand{\ee}{\mathrm{e}}
\newcommand{\ii}{\mathrm{i}}
\newcommand{\floor}[1]{\left\lfloor #1\right\rfloor}

\begin{document}

\title[A truncated Legendre-symbol determinant]{A Proof of a Conjecture of Zhi-Wei Sun on a Truncated Legendre-Symbol Determinant}

\author[1]{\fnm{Yaoran} \sur{Yang}}\email{yangyaoran@stu.scu.edu.cn}
\author[2]{\fnm{Gaishi} \sur{Yang}}\email{dream@mails.ccnu.edu.cn}
\author*[1]{\fnm{Yutong} \sur{Zhang}}\email{yutongzhang@stu.scu.edu.cn}

\affil[1]{\orgdiv{School of Mathematics}, \orgname{Sichuan University},
\orgaddress{\city{Chengdu}, \state{Sichuan}, \country{China}}}

\affil[2]{\orgdiv{School of Mathematics and Statistics},
\orgname{Central China Normal University},
\orgaddress{\city{Wuhan}, \state{Hubei}, \country{China}}}

\abstract{Let $p\ge7$ be a prime with $p\equiv3\pmod4$ and let $\chi$ be the
Legendre symbol modulo $p$.  We prove that
$\det[x+\chi(j-k)]_{0\le j,k\le(p-7)/2}=\floor{(p-2)/3}^{2}x$ in
$\mathbb{C}[x]$, which settles Conjecture~3.4 of Zhi-Wei Sun.  The truncated matrix is a corner of Chapman's Legendre-symbol
matrix $C$, and its determinant can be expressed through a handful of entries
of $C^{-1}$ and of $C^{-1}\one$.  Those entries are in turn read off from
Vsemirnov's cyclotomic factorization of $C$ with the help of Schur's Pfaffian
identity.}

\keywords{Legendre symbol, determinant, Pfaffian, quadratic Gauss sum,
Chapman's matrix}

\pacs[MSC Classification]{Primary 11C20; Secondary 11A15, 15A15}

\maketitle

\section{Introduction}

For an odd prime $p$ let $\chi(a)=\left(\frac ap\right)$ be the Legendre
symbol, with the convention $\chi(0)=0$.  Determinants formed from the values
$\chi(j-i)$ have been studied since Chapman~\cite{Chapman2004}, who evaluated
several families of them.  The one that has attracted the most attention is
the ``evil determinant'' of
\[
 C=\bigl[\chi(j-i)\bigr]_{0\le i,j\le (p-1)/2}.
\]
For $p\equiv 3\pmod 4$, Vsemirnov~\cite{Vsemirnov2012} proved that $\det C=1$
and, what matters more here, obtained an explicit factorization of $C$ into
cyclotomic data; we use the normalization recorded in
\cite[Remark~1, Eq.~(2.6)]{Vsemirnov2013}.

Sun~\cite{Sun2024} has assembled a long list of problems and conjectures on
determinants involving Legendre symbols.  The present note settles
\cite[Conjecture~3.4]{Sun2024}.

\begin{theorem}\label{thm:main}
Let $p\ge 7$ be a prime with $p\equiv 3\pmod 4$.  Then, as an identity in
$\mathbb{C}[x]$,
\begin{equation}\label{eq:main}
 \det\!\left[x+\chi(j-k)\right]_{0\le j,k\le (p-7)/2}
 =\floor{\frac{p-2}{3}}^{\!2}x.
\end{equation}
\end{theorem}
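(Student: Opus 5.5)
Let $n=(p+1)/2$, so $C$ is $n\times n$ with indices $0,\dots,n-1$, and let $A=[\chi(j-k)]_{0\le j,k\le m-1}$ with $m=(p-5)/2=n-3$. The matrix in \eqref{eq:main} is then $A+x\one\one^{\T}$, and $A$ is the leading $m\times m$ principal submatrix of $C$, obtained by deleting the last three rows and columns. For $p\equiv3\pmod4$ we have $\chi(-1)=-1$, so $A$ is skew-symmetric of odd order $m$. Hence $\det A=0$, and by the matrix determinant lemma
\[
 \det(A+x\one\one^{\T})=\det A+x\,\one^{\T}\operatorname{adj}(A)\one=x\,\one^{\T}\operatorname{adj}(A)\one .
\]
The claim is therefore linear in $x$, and it reduces to proving $\one^{\T}\operatorname{adj}(A)\one=\floor{(p-2)/3}^2$. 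Since $A$ is skew of odd order, $\operatorname{adj}(A)=vv^{\T}$, where $v_i=(-1)^i\Pf(A_{\hat i\hat i})$. So the quantity to compute is $\bigl(\sum_i(-1)^i\Pf(A_{\hat i\hat i})\bigr)^2$, and it suffices to show $\sum_i(-1)^i\Pf(A_{\hat i\hat i})=\pm\floor{(p-2)/3}$.

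To relate this to $C$, I would write $\one^{\T}\operatorname{adj}(A)\one=\det\begin{pmatrix}A&\one\\-\one^{\T}&0\end{pmatrix}$. Minors of $C$ are governed by Jacobi's complementary-minor theorem: since $\det C=1$, the $m\times m$ principal minor on $\{0,\dots,m-1\}$ equals the complementary $3\times3$ principal minor of $C^{-1}$ on $\{m,m+1,m+2\}$. Bordering by $\one$ amounts to working with the $(n+1)\times(n+1)$ matrix $\tilde C=\begin{pmatrix}C&\one\\-\one^{\T}&0\end{pmatrix}$. Applying Jacobi to $\tilde C$ turns the bordered minor of $A$ into a $3\times3$ minor of $\tilde C^{-1}$, and the entries of $\tilde C^{-1}$ are expressible through $C^{-1}$, $u=C^{-1}\one$ and $\one^{\T}C^{-1}\one$ via the Schur complement. (Note that $\one^{\T}C^{-1}\one=0$ since $C^{-1}$ is skew, so one has to handle $\tilde C$ carefully, e.g.\ with the adjugate of $\tilde C$ instead of its inverse, using Pfaffians since $\tilde C$ is skew of even order.) The outcome should be a closed expression in the entries $(C^{-1})_{ab}$ for $a,b\in\{n-3,n-2,n-1\}$ and the coordinates $u_{n-3},u_{n-2},u_{n-1}$. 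Because everything is skew, this collapses to a squared Pfaffian-like combination, roughly $\bigl(c_{12}u_3-c_{13}u_2+c_{23}u_1\bigr)^2$ up to normalization.

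The main step, and the expected obstacle, is computing these few entries of $C^{-1}$ and $C^{-1}\one$ explicitly. For this I would use Vsemirnov's factorization \cite[Eq.~(2.6)]{Vsemirnov2013}, which writes $C$ (up to conjugation by triangular or diagonal matrices) in terms of Vandermonde-type matrices in $\zeta=\ee^{2\pi\ii/p}$ and Gauss-sum data. Inverting it yields entries of $C^{-1}$ as explicit sums. For the last three rows I would evaluate these by Schur's Pfaffian identity $\Pf[(x_i-x_j)/(x_i+x_j)]=\prod_{i<j}(x_i-x_j)/(x_i+x_j)$, which fits skew matrices of Cauchy type that should arise from the factorization. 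I expect the needed entries to be small integers depending only on $p\bmod 3$ (or $p\bmod 12$), and the final combination to come out as $\floor{(p-2)/3}$. As a sanity check, $p=7$ gives a $1\times1$ matrix $[x]$ with value $x=\floor{5/3}^2x$. If the explicit factorization proves unwieldy, a fallback is to derive linear recurrences in the index for rows of $C^{-1}$ from the Toeplitz structure and solve them near the boundary.
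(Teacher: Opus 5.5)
\paragraph{Reduction step.} Your opening is sound: $\det A=0$, the matrix determinant lemma makes the identity linear in $x$, and $\operatorname{adj}(A)=vv^{\T}$ for skew $A$ of odd order. (Your $A$ is the transpose of the principal submatrix of $C$, which is harmless.)

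The passage through $\tilde C$ fails as described. $C$ has even order $(p+1)/2$, so $\tilde C$ has \emph{odd} order $(p+3)/2$. Indeed $\det\tilde C=\det C\cdot\one^{\T}C^{-1}\one=0$, so there is no inverse to feed into Jacobi's identity. Its adjugate form $\det\bigl((\operatorname{adj}\tilde C)[J,J]\bigr)=(\det\tilde C)^{2}\det\tilde C[J^{c},J^{c}]$ just reads $0=0$ when $|J|=3$.

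The formula you guess is nevertheless right. A correct route is to apply Jacobi to $C+y\one\one^{\T}$. Write $W=C^{-1}$ and $\gamma=W\one$. This matrix has determinant $\det C\,(1+y\one^{\T}W\one)=1$, and by Sherman--Morrison with $\one^{\T}W=-\gamma^{\T}$ its inverse is $W+y\gamma\gamma^{\T}$. For $J=\{j_1,j_2,j_3\}$ the last three indices this gives $\det(C_{J^{c}J^{c}}+y\one\one^{\T})=\det(W_{JJ}+y\gamma_J\gamma_J^{\T})=y\,(W_{j_2j_3}\gamma_{j_1}-W_{j_1j_3}\gamma_{j_2}+W_{j_1j_2}\gamma_{j_3})^{2}$.

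The paper reaches the same quantity differently, with $n=(p-1)/2$ as in the paper. It goes through the even-order block $H$ on $\{0,\dots,n-2\}$, whose complement has size two, plus the Pfaffian shift identity $W_{n-1,n}=W_{0,n}$. This keeps every needed entry in rows $0,n-1,n$. Your complement contains the index $n-2$, whose Lagrange vector is $Gb_{n-2}=v_5-Sv_3+e_2v_1$, where $e_2$ is the second elementary symmetric function of the nodes. Carrying out your version would therefore require the expansion of $P$ through order four, one order beyond the paper's $\rho_3$.

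\paragraph{Main gap: evaluating the entries.} The genuine gap is the second half, which carries the whole proof and which you only announce. Schur's identity evaluates Pfaffians, but the entries you need are bilinear forms $\lambda^{-1}(D^{-1}V^{-1}e_a)^{\T}GK^{-1}G(D^{-1}V^{-1}e_b)$ coming from $C=\lambda VDUDV$. Writing $(K^{-1})_{ij}$ via Pfaffian cofactors only produces double sums over $i,j$, and the proposal gives no way to evaluate them. The missing ideas are:
\begin{itemize}
\item the rescaling $K=GUG$ with $G=\diag(a_0,\dots,a_n)$, $a_0=1$, $a_i=\chi(i)\zeta^{i}$, which turns Vsemirnov's middle factor into the Cayley-transformed Schur matrix $K_{ij}=(a_j-a_i)/(1-a_ia_j)$;
\item adjoining two free nodes $z,w$ to Schur's identity and using the Pfaffian Schur complement, which gives the closed generating function $k(z)^{\T}K^{-1}k(w)=\frac{w-z}{1-zw}\bigl(P(z)P(w)-1\bigr)$;
\item recognising the columns of $D^{-1}V^{-1}$ as coefficient vectors of $\prod_{k\ne i}(X-x_k)$, so that after multiplication by $G$ they become Taylor coefficients of $k(z)$ at $0$ or $\infty$;
\item evaluating the resulting power sums as Gauss sums, $\delta_1=\tau$ and $\delta_3=\chi(3)\tau$, with quadratic reciprocity converting $\chi(3)$ into the residue of $p$ modulo $6$.
\end{itemize}
That last step is exactly what produces $\floor{(p-2)/3}$.

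Your expectation that the entries are small integers depending only on $p$ modulo $12$ cannot be right. The answer is a fixed polynomial in those entries, yet it grows with $p$. In fact $\gamma_{n-1}=-n\gamma_n$ and $W_{0,n-1}=-\floor{p/6}\,W_{0,n}$. The recurrence fallback is too vague to assess.
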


Throughout we fix such a $p$ and write
\[
 p=2n+1,\qquad m=\floor{p/6}.
\]
Then $n=(p-1)/2$ is odd, so that $\chi(-1)=-1$ and $C$ is skew-symmetric of
even order $n+1$.  Since $p>3$ is prime we have $p\equiv\pm1\pmod 6$: either
$p=6m+1$ and $n=3m$, or $p=6m+5$ and $n=3m+2$.  Checking the two cases
separately,
\begin{equation}\label{eq:floor}
 \floor{\frac{p-2}{3}}=n-1-m,
\end{equation}
and it is in this shape that the right-hand side of \cref{eq:main} will
appear.

Apart from a sign and the rank-one term $x\one\one^{\T}$, where $\one$ denotes
an all-ones column vector, the matrix in \cref{eq:main} is a principal
submatrix of $C$ of odd order $n-2$.  Jacobi's identity for the minors of an
inverse, used twice and combined with the Sherman--Morrison formula, therefore
collapses the determinant into the square of a single coordinate of
$C^{-1}\one$, and that coordinate is a three-term combination of entries of
$C^{-1}$ and $C^{-1}\one$; this is \cref{sec:reduction}.  Vsemirnov's
factorization then replaces $C^{-1}$ by the inverse of a matrix $K$ of the
shape occurring in Schur's Pfaffian identity (\cref{sec:vsemirnov}), and
$K^{-1}$ is reached by adjoining two extra nodes to that identity
(\cref{sec:kernel}).  All that is left is to read the required entries off the
first Taylor coefficients of a single rational function
(\cref{sec:evaluation}).

\section{Reduction to the inverse of Chapman's matrix}\label{sec:reduction}

Write $W=C^{-1}$, which is skew-symmetric because $C$ is.  Vsemirnov's theorem
for $p\equiv3\pmod4$ gives
\begin{equation}\label{eq:detC}
 \det C=1.
\end{equation}
The three quantities that carry the whole argument are
\[
 d=W_{0,n},\qquad r=W_{0,n-1},\qquad \gamma=W\one .
\]

\begin{proposition}\label[proposition]{prop:inverse}
With the notation above,
\begin{equation}\label{eq:inverse-data}
 d^{2}=\gamma_n^{2}=1,\qquad
 r=-md,\qquad
 \gamma_0=-\gamma_n,\qquad
 \gamma_{n-1}=-n\gamma_n .
\end{equation}
\end{proposition}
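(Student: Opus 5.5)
The identity $\gamma_0=-\gamma_n$ is formal, and I would dispose of it first. Let $J$ be the reversal permutation matrix of order $n+1$. Since $C_{ij}=\chi(j-i)$ depends only on $j-i$, we get $(JCJ)_{ij}=\chi((n-j)-(n-i))=\chi(i-j)=C_{ji}$. Hence $JCJ=C^{\T}=-C$, and inverting gives $JWJ=-W$. Because $J\one=\one$, applying both sides to $\one$ yields $J\gamma=-\gamma$, that is, $\gamma_{n-k}=-\gamma_k$ for all $k$. In particular $\gamma_0=-\gamma_n$, and the same symmetry gives $W_{n-i,n-j}=-W_{ij}$. These persymmetry relations will also halve the work below, since $d=W_{0,n}$ is fixed by the reversal only up to the skew-symmetry of $W$.

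For the remaining relations $d^2=\gamma_n^2=1$, $r=-md$ and $\gamma_{n-1}=-n\gamma_n$, I do not see an elementary route. By Jacobi's identity, $d^2$ equals the principal minor $\det[\chi(j-i)]_{1\le i,j\le n-1}$ (using $\det C=1$ from \cref{eq:detC}), which is not obviously $1$. So I would follow the route announced in the introduction and use Vsemirnov's factorization \cite[Remark~1, Eq.~(2.6)]{Vsemirnov2013}. In the form I expect, it writes $C=A\,K\,A^{\T}$ (up to diagonal scalings by Gauss-sum factors), where:
\begin{itemize}
\item $A$ is a Vandermonde-type matrix built from powers of the quadratic residues (or $\zeta^{a}$-data with $\zeta=\ee^{2\pi\ii/p}$);
\item $K$ is a Cauchy-like skew matrix with entries of the shape $(x_i-x_j)/(x_i+x_j)$.
\end{itemize}
Then $W=A^{-\T}K^{-1}A^{-1}$. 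The entries $W_{0,n}$ and $W_{0,n-1}$ only involve the first and last two rows of $A^{-1}$, which are given by Lagrange interpolation: they are leading and constant coefficients of interpolating polynomials. Likewise $\gamma=W\one$ only needs $A^{-1}\one$, which is again an interpolation datum.

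The key step is computing $K^{-1}$. By Schur's identity $\Pf[(x_i-x_j)/(x_i+x_j)]=\prod_{i<j}(x_i-x_j)/(x_i+x_j)$, every entry of $K^{-1}$ is a ratio of Pfaffians, namely the Pfaffian with two nodes deleted over the full Pfaffian. This is an explicit product. To handle the contractions against the interpolation vectors, I would adjoin two auxiliary nodes $u,v$ to Schur's identity. Expanding $\Pf$ of the bordered matrix in $u$ and $v$ then produces the bilinear forms $a^{\T}K^{-1}b$ as the coefficients of a single rational function $F(u,v)=\prod_i\frac{(u-x_i)(v-x_i)}{(u+x_i)(v+x_i)}\cdot\frac{u-v}{u+v}$. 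The four quantities $d$, $r$, $\gamma_n$ and $\gamma_{n-1}$ should then be read off from the first one or two Taylor coefficients of $F$ at $u,v\to0$ or $\infty$.

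The main obstacle is the final evaluation. The first Taylor coefficient will be a power sum such as $\sum_i x_i$ or $\sum_a a$ over quadratic residues. I expect $d$ and $\gamma_n$ to come out as products of Gauss sums $\sqrt{-p}$ whose squares cancel, giving $\pm1$. I expect $r/d$ and $\gamma_{n-1}/\gamma_n$ to be first-order coefficients. For $\gamma_{n-1}/\gamma_n$, the coefficient should reduce to $-n$ directly from the count of residues. For $r/d$, the coefficient involves a sum such as $\sum_{a\ \mathrm{residue}}\lfloor 3a/p\rfloor$, or a character sum counting residues in $(0,p/3)$ minus those in $(p/3,p/2)$, which must equal $-m=-\lfloor p/6\rfloor$. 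I would first try to verify this by splitting into the two cases $p=6m+1$ and $p=6m+5$ and using $\chi(-1)=-1$, together with the classical evaluation of $\sum_{a<p/2}\chi(a)$ via $h(-p)$ and its cancellation against the corresponding sums over thirds. Keeping the signs consistent through Vsemirnov's normalization is where I expect most of the risk.
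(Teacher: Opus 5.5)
\textbf{Verdict: genuine gap.} Your persymmetry argument for $\gamma_0=-\gamma_n$ is correct and complete: $JCJ=C^{\T}=-C$, hence $JWJ=-W$ and $J\gamma=-\gamma$. It is also more elementary than the paper's route, which gets this relation from the cyclotomic formulas as $\gamma_0/\gamma_n=-\Delta^{2}\phi_2(v_{-1})/\phi_2(v_1)=-1$. For the other relations, your outline matches the paper's architecture: Vsemirnov's factorization, Lagrange interpolation for the needed columns of $V^{-1}$, Schur's identity with two adjoined nodes, and Taylor coefficients. But the substance of the proposition is the evaluation you leave open, and your predictions for it point the wrong way.

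Three ingredients are missing.
\begin{itemize}
\item \emph{The nodes.} The Pfaffian kernel is not of Schur type in the Vandermonde nodes $x_i=\zeta^{2i}$. You must rescale Vsemirnov's $U$ by $G=\diag(a_0,\dots,a_n)$, using the $\chi$-twisted square roots $a_0=1$, $a_i=\chi(i)\zeta^{i}$, to get $K_{ij}=(a_j-a_i)/(1-a_ia_j)$. Only with these nodes do the odd power sums $\delta_\mu=\sum_i(a_i^{\mu}-a_i^{-\mu})$ become complete Gauss sums $\tau(\mu)=\chi(\mu)\tau$.
\item \emph{The order of the Taylor data.} The column needed for $r$ and $\gamma_{n-1}$ is $Gb_{n-1}=v_3-Sv_1$, and $a_i^{3}$ enters $k(z)$ only at order $z^{2}$. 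So $\gamma_{n-1}/\gamma_n=1-S+\rho_2$ and $r/d=1-S+\rho_3/\rho_1$ need the second and third Taylor coefficients of $P(z)=\prod_i(z-a_i)/(1-a_iz)$, not first-order ones. Also, $K^{-1}e_0$, which $\gamma_n$ and $\gamma_{n-1}$ require, is obtained by specializing the two-point identity at $w=\pm1$ (using $a_0=1$), not from expansions at $0$ or $\infty$.
\item \emph{The source of the values.} They come from $\tau^{2}=-p$ and $\delta_3/\delta_1=\chi(3)$. One gets $\gamma_{n-1}/\gamma_n=\tfrac12+\tfrac{\tau^2}{2}=-n$ and $r/d=\tfrac12-\tfrac p6+\tfrac{\chi(3)}3$. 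The latter equals $-m$ once quadratic reciprocity gives $\chi(3)=-1$ for $p\equiv1\pmod 6$ and $\chi(3)=1$ for $p\equiv5\pmod 6$.
\end{itemize}

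The half-interval sums over $(0,p/3)$ and $(p/3,p/2)$ and the class number $h(-p)$ that you anticipate play no role, and they cannot produce $-m$. Here $m$ grows linearly in $p$, whereas such a character sum is $O(\sqrt p\log p)$ by P\'olya--Vinogradov, and the corresponding residue-count difference is $p/12+O(\sqrt p\log p)>0$. Already for $p=7$ these give $3$ and $2$ rather than $-1$.

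Finally, $d^{2}=\gamma_n^{2}=1$ needs the explicit values $d=-\Delta^{2}\tau/\lambda$ and $\gamma_n=-h\Delta/\lambda$, with $\lambda=-\tau(2)\zeta^{-(p+1)/4}$ and $h=g'(1)$. You then need $\Delta^{4}=\zeta^{n}$, $\lambda^{2}=-p\zeta^{n}$ and $h^{2}=-p\Delta^{2}$. Your expectation that ``squares of Gauss sums cancel'' is right, but this root-of-unity bookkeeping is exactly what has to be carried out.
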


The proof of \cref{prop:inverse} is cyclotomic and occupies
\cref{sec:vsemirnov,sec:kernel,sec:evaluation}.  Here we deduce the theorem
from it.

\begin{proposition}\label[proposition]{prop:reduction}
\Cref{prop:inverse} implies \cref{thm:main}.
\end{proposition}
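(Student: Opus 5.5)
The plan is to reduce the determinant to a $3\times3$ computation with the inverse of a rank-one perturbation of $C$. Put $N=n-2$ and $A'=[x+\chi(j-k)]_{0\le j,k\le n-3}$. Since $\chi(j-k)=-\chi(k-j)$, we have $A'=x\one\one^{\T}-A$, where $A=[\chi(k-j)]$ is a principal $N\times N$ block of $C$. Because $C$ is Toeplitz, this block can be placed anywhere along the diagonal. I will take the index set $S=\{1,\dots,n-2\}$, whose complement in $\{0,\dots,n\}$ is $T=\{0,n-1,n\}$, since these are the indices that carry $d$, $r$, $\gamma_0$, $\gamma_{n-1}$ and $\gamma_n$. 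As $N$ is odd, $\det A'=-\det(A-x\one\one^{\T})$.

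First I compute $\det(C-x\one\one^{\T})$. By the matrix determinant lemma it equals $\det C\,(1-x\one^{\T}W\one)$. Here $\det C=1$ by \cref{eq:detC}, and $\one^{\T}W\one=0$ because $W$ is skew. Next I invert the perturbation. Since $\one^{\T}W=-\gamma^{\T}$, Sherman--Morrison gives $(C-x\one\one^{\T})^{-1}=W-x\gamma\gamma^{\T}$. Jacobi's complementary-minor identity then yields
\[
\det(A-x\one\one^{\T})=\det\bigl(W_{TT}-x\gamma_T\gamma_T^{\T}\bigr).
\]

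The right-hand side is a $3\times3$ skew matrix plus a rank-one term. Write its entries in the order $(0,n-1,n)$ as $W_{0,n-1}=r$, $W_{0,n}=d$ and $W_{n-1,n}=c$. Then $\det W_{TT}=0$ and $\operatorname{adj}W_{TT}=ww^{\T}$ with $w=(c,-d,r)$. The determinant lemma for singular matrices therefore gives $\det(W_{TT}-x\gamma_T\gamma_T^{\T})=-x(w\cdot\gamma_T)^2$. Putting the steps together,
\[
\det A'=x\,(c\gamma_0-d\gamma_{n-1}+r\gamma_n)^2 .
\]
Substituting \cref{eq:inverse-data}, that is $\gamma_0=-\gamma_n$, $\gamma_{n-1}=-n\gamma_n$ and $r=-md$, the bracket becomes $\gamma_n\bigl(-c+(n-m)d\bigr)$. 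With $\gamma_n^2=d^2=1$, the theorem follows from \cref{eq:floor} exactly when $c=d$.

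The main obstacle is this remaining entry $c=W_{n-1,n}$, which is not listed among the data of \cref{prop:inverse}. My first attempt is persymmetry: since $C_{ij}=C_{n-j,n-i}$, we get $W_{ij}=W_{n-j,n-i}$, hence $c=W_{0,1}$. I would then try to relate $W_{0,1}$ to $d$ using the single vanishing coordinate relation $(CW)_{0,n}=0$ together with the $\gamma$ data. Failing that, I would apply Jacobi's identity a second time, as the introduction suggests: choose the block position $S=\{0,\dots,n-3\}$ as well, compare the two expressions, and eliminate $c$. If neither route works, the fallback is to derive the value of $W_{0,1}$ from the same cyclotomic evaluation that proves \cref{prop:inverse}.
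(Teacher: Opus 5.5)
Your reduction is sound up to the last step. The facts $\det(C-x\one\one^{\T})=1$ and $(C-x\one\one^{\T})^{-1}=W-x\gamma\gamma^{\T}$, Jacobi's identity with complement $T=\{0,n-1,n\}$, and $\operatorname{adj}W_{TT}=ww^{\T}$ do give $\det A'=x\,(c\gamma_0-d\gamma_{n-1}+r\gamma_n)^2$. This single rank-one perturbation of all of $C$ is tidier than the paper's detour through the even block $H$, Sherman--Morrison on $H+\one\one^{\T}$, and the padded vector $\hat u$.

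\textbf{Gap.} The argument is not complete, because it needs $c=W_{n-1,n}=d$ and you do not prove it. This is not cosmetic: $c=-d$ would produce $(n+1-m)^2$. None of your proposed routes settles it:
\begin{itemize}
\item Persymmetry only moves $c$ to $W_{0,1}$.
\item $(CW)_{0,n}=0$ is one linear relation among the entries of a whole row of $W$.
\item The block $S=\{0,\dots,n-3\}$ brings in new unknowns $W_{n-2,n-1}$, $W_{n-2,n}$, $\gamma_{n-2}$. The mirror block $\{2,\dots,n-1\}$ just returns your bracket up to sign.
\item Jacobi's identity with two-element complements gives $d^2=\det C[\{1,\dots,n-1\}]$ and $c^2=\det C[\{0,\dots,n-2\}]$. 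These submatrices coincide, so you get $c^2=d^2$, but the sign is precisely what matters.
\end{itemize}

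\textbf{Missing ingredient.} What you need is the Pfaffian cofactor formula: for invertible skew-symmetric $A$ and $i<j$, $(A^{-1})_{ij}=\pm(-1)^{i+j}\Pf A_{\widehat{i,j}}/\Pf A$, where $\pm$ is a global convention sign. Deleting rows and columns $\{0,n\}$ or $\{n-1,n\}$ from the Toeplitz matrix $C$ leaves literally the same matrix. Because $n$ is odd, $(-1)^{0+n}=(-1)^{(n-1)+n}$, hence $W_{n-1,n}=W_{0,n}$. This is Step~1 of the paper's proof. With it your bracket becomes $(n-1-m)\,d\,\gamma_n$, and the proof finishes via \cref{eq:floor}.

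Your cyclotomic fallback would also work. Putting $w=0$ in \cref{eq:kernel} gives $k(z)^{\T}K^{-1}v_1=z(\Delta P(z)-1)$, and \cref{lem:extract} then yields $W_{n-1,n}=-\Delta^2\tau/\lambda=d$. However, that adds a computation outside \cref{prop:inverse}, whereas the Pfaffian argument is purely structural and stays within what the statement lets you assume.
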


\begin{proof}
Let $H$ and $T$ be the principal submatrices of $C$ on the index sets
$\{0,1,\ldots,n-2\}$ and $\{0,1,\ldots,n-3\}$.  Thus $H$ has even order $n-1$
and $T$ has odd order $n-2$.

\emph{Step 1: $W_{n-1,n}=d$.}  For an invertible skew-symmetric matrix $A$
indexed from $0$ and for $i<j$, the Pfaffian cofactor formula reads
\[
 (A^{-1})_{ij}=\pm(-1)^{i+j}\frac{\Pf A_{\widehat{i,j}}}{\Pf A},
\]
where $A_{\widehat{i,j}}$ is obtained by deleting rows and columns $i$ and
$j$, and the overall sign depends on the convention chosen for the Pfaffian.
Only the ratio of two such expressions will occur, so that convention is
immaterial.  Deleting rows and columns $0$ and $n$ from $C$ leaves the ordered
index set $\{1,\ldots,n-1\}$, whereas deleting $n-1$ and $n$ leaves
$\{0,\ldots,n-2\}$; since $C_{ij}$ depends only on $j-i$, the two submatrices
are literally equal.  Their Pfaffians therefore agree, and so do the two
signs, because $n$ is odd and hence $(-1)^{0+n}=(-1)^{(n-1)+n}=-1$.
Consequently
\begin{equation}\label{eq:shift}
 W_{n-1,n}=W_{0,n}=d.
\end{equation}

\emph{Step 2: $\det H=1$.}  Jacobi's identity for the minors of an inverse
states that $\det A[I,I]=\det A\cdot\det A^{-1}[I^{c},I^{c}]$ for invertible
$A$.  Taking $A=C$ and $I=\{0,\ldots,n-2\}$ and using \cref{eq:detC,eq:shift},
\[
 \det H=\det C\cdot\det\begin{pmatrix}0&d\\-d&0\end{pmatrix}=d^{2}=1 .
\]
In particular $H$ is invertible, and we may set $u=H^{-1}\one$.

\emph{Step 3: $\det(T+\one\one^{\T})=u_0^{2}$.}  The inverse $H^{-1}$ is again
skew-symmetric, so $\one^{\T}H^{-1}\one=0$ and $\one^{\T}H^{-1}=-u^{\T}$.
Writing $B=H+\one\one^{\T}$, the matrix determinant lemma and the
Sherman--Morrison formula give
\[
 \det B=\det H\bigl(1+\one^{\T}H^{-1}\one\bigr)=1,
 \qquad
 B^{-1}=H^{-1}-\frac{H^{-1}\one\one^{\T}H^{-1}}
 {1+\one^{\T}H^{-1}\one}=H^{-1}+uu^{\T}.
\]
Deleting the last row and column of $B$ leaves $T+\one\one^{\T}$, so the
one-index case of Jacobi's identity yields
\[
 \det(T+\one\one^{\T})=\det B\cdot(B^{-1})_{n-2,n-2}=u_{n-2}^{2},
\]
the diagonal entry of $H^{-1}$ being zero.  Let now $\mathcal R$ be the
reversal permutation matrix of order $n-1$.  As $H$ is Toeplitz and
$\chi(-1)=-1$, we have $\mathcal RH\mathcal R=-H$, that is,
$H\mathcal R=-\mathcal RH$; since $\mathcal R\one=\one$, this gives
$H(\mathcal Ru)=-\mathcal RHu=-\one$, whence $\mathcal Ru=-u$ and
$u_{n-2}=-u_0$.

\emph{Step 4: $u_0=\gamma_0-\gamma_{n-1}+(r/d)\gamma_n$.}  Extend $u$ by two
zeros to $\hat u=(u_0,\ldots,u_{n-2},0,0)^{\T}\in\mathbb C^{n+1}$.  Only the
block $H$ acts on the nonzero coordinates of $\hat u$, so the first $n-1$
coordinates of $C\hat u$ are those of $Hu=\one$.  Hence $C\hat u=\one-w$ for
some vector $w$ supported on $\{n-1,n\}$, and applying $W$ gives
$\hat u=\gamma-Ww$.  Coordinates $n-1$ and $n$ of $\hat u$ vanish, and by
\cref{eq:shift} together with the skew-symmetry of $W$ they read
\[
 0=\gamma_{n-1}-dw_n,\qquad 0=\gamma_n+dw_{n-1}.
\]
Since $d^{2}=1$ we have $d\ne0$, so $w_n=\gamma_{n-1}/d$ and
$w_{n-1}=-\gamma_n/d$.  Coordinate $0$ of $\hat u=\gamma-Ww$ now gives
\[
 u_0=\gamma_0-\bigl(rw_{n-1}+dw_n\bigr)
 =\gamma_0-\gamma_{n-1}+\frac rd\,\gamma_n .
\]

\emph{Step 5: conclusion.}  Feeding \cref{eq:inverse-data} into Step 4 yields
$u_0=(n-1-m)\gamma_n$, so that, by \cref{eq:floor} and $\gamma_n^{2}=1$,
\begin{equation}\label{eq:TJ}
 \det(T+\one\one^{\T})=u_0^{2}=(n-1-m)^{2}\gamma_n^{2}
 =\floor{\frac{p-2}{3}}^{\!2}.
\end{equation}
Expanding by multilinearity in the columns,
$\det(T+y\one\one^{\T})=\det T+y\,\one^{\T}\!\operatorname{adj}(T)\one$, and
$\det T=0$ because $T$ is skew-symmetric of odd order; hence
$\det(T+y\one\one^{\T})=y\det(T+\one\one^{\T})$ for every $y$.  Finally
$T_{jk}=\chi(k-j)=-\chi(j-k)$, so the matrix in \cref{eq:main} is
$x\one\one^{\T}-T$, of odd order $n-2$, and
\[
 \det(x\one\one^{\T}-T)=-\det(T-x\one\one^{\T})=x\det(T+\one\one^{\T}).
\]
Together with \cref{eq:TJ} this is precisely \cref{eq:main}.
\end{proof}

\section{Vsemirnov's factorization and Schur's Pfaffian
identity}\label{sec:vsemirnov}

Let $\zeta=\ee^{2\pi\ii/p}$ and, for $a\in\mathbb Z$,
\[
 \tau(a)=\sum_{t=1}^{p-1}\chi(t)\zeta^{at},\qquad \tau=\tau(1),
\]
so that $\tau(a)=\chi(a)\tau$ whenever $p\nmid a$, and
\begin{equation}\label{eq:gauss}
 \tau^{2}=\chi(-1)p=-p;
\end{equation}
see for instance \cite[Chapter~6]{IrelandRosen}.  Put $x_i=\zeta^{2i}$ for
$0\le i\le n$.  These are pairwise distinct, since $2i\equiv2j\pmod p$ forces
$i=j$ in the range considered.  With $g(X)=\prod_{i=0}^{n}(X-x_i)$, define
matrices $V,D,U$ of order $n+1$ by
\begin{gather*}
 V_{ij}=\zeta^{2ij}=x_i^{\,j},
 \qquad
 D=\diag\!\left(\frac1{g'(x_0)},\ldots,\frac1{g'(x_n)}\right),\\
 U_{ij}=\frac{\chi(i)\zeta^{-j-2i}-\chi(j)\zeta^{-2j-i}}
 {\zeta^{-i-j}-\chi(i)\chi(j)} .
\end{gather*}
Thus $V$ is the Vandermonde matrix of the nodes $x_i$, symmetric and
invertible, while $U$ is skew-symmetric.  Vsemirnov's theorem for
$p\equiv3\pmod4$ asserts \cref{eq:detC} together with the factorization
\begin{equation}\label{eq:vsemirnov}
 C=\lambda VDUDV,\qquad \lambda=-\tau(2)\zeta^{-(p+1)/4},
\end{equation}
the exponent being an integer because $p\equiv3\pmod4$; see
\cite{Vsemirnov2012} and \cite[Remark~1, Eq.~(2.6)]{Vsemirnov2013}.

A diagonal rescaling turns $U$ into a matrix of Cauchy type.  Set
\[
 a_0=1,\qquad a_i=\chi(i)\zeta^{i}\quad(1\le i\le n),
 \qquad G=\diag(a_0,a_1,\ldots,a_n),
 \qquad K=GUG.
\]

\begin{lemma}\label[lemma]{lem:K}
The numbers $a_0,\ldots,a_n$ are pairwise distinct, and $a_ia_j=1$ only for
$i=j=0$.  All denominators in the definition of $U$ are nonzero, and
\[
 K_{ij}=\frac{a_j-a_i}{1-a_ia_j}\quad(i\ne j),\qquad K_{ii}=0 .
\]
\end{lemma}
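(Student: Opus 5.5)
The plan is a direct computation from the definitions of $a_i$ and $U$. It rests on one elementary observation: $a_i$ for $i\ge1$ is $\pm\zeta^{i}$, and for $1\le i\le n$ the exponents $i$ lie strictly between $0$ and $p/2$.

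\emph{Distinctness.} First, $a_0=1$ differs from every $a_i$ with $i\ge1$. Indeed $a_i=\pm\zeta^i$, and $\zeta^i\ne\pm1$ for $1\le i\le n<p$, since $-1$ is not a power of $\zeta$ ($p$ odd). Next, for $1\le i<j\le n$, an equality $a_i=a_j$ would force $\zeta^{j-i}=\pm1$. This is impossible because $0<j-i<p$.

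\emph{The products $a_ia_j$.} If $i=0$, then $a_0a_j=a_j\ne1$ for $j\ge1$ by the previous step, while $a_0a_0=1$. If $i,j\ge1$, then $a_ia_j=\pm\zeta^{i+j}$ with $2\le i+j\le 2n=p-1$. Such a product cannot be $1$, since $\zeta^{i+j}=\pm1$ would need $p\mid i+j$.

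\emph{Nonvanishing denominators.} The denominator of $U_{ij}$ is $\zeta^{-i-j}-\chi(i)\chi(j)$. Multiplying by $a_ia_j$, and interpreting $\chi(0)=0$ correctly in the case $i=0$ or $j=0$, one gets
\[
 a_ia_j\bigl(\zeta^{-i-j}-\chi(i)\chi(j)\bigr)=\chi'(i)\chi'(j)-a_ia_j,
\]
where $\chi'(i)$ denotes $\chi(i)$ for $i\ge1$ and $1$ for $i=0$. This case distinction is the one point that needs care. When $i,j\ge1$ we have $\chi(i)^2=1$, so the denominator equals $\chi(i)\chi(j)\bigl(\chi(i)\chi(j)\zeta^{-i-j}\cdot\chi(i)\chi(j)\zeta^{i+j}\ldots\bigr)$, which simplifies to $\chi(i)\chi(j)(a_ia_j)^{-1}\bigl(1-a_ia_j\bigr)$. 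When $i=0$ and $j\ge1$, the denominator is $\zeta^{-j}$, which is clearly nonzero. For $i=j=0$ it is $\zeta^0-0=1$. By the previous step the denominator is nonzero in every case.

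\emph{The formula for $K$.} This is checked directly. For $i,j\ge1$, the numerator equals $\chi(i)\zeta^{-j-2i}-\chi(j)\zeta^{-2j-i}$. Multiplying numerator and denominator by $\zeta^{i+j}$, and then numerator and denominator by $\chi(i)\chi(j)$, gives
\[
 U_{ij}=\frac{\chi(j)\zeta^{-i}-\chi(i)\zeta^{-j}}{\chi(i)\chi(j)\zeta^{i+j}\cdot\zeta^{-i-j}\ldots}.
\]
Rather than push this through symbolically, I would substitute $\zeta^{-i}=\chi(i)/a_i$ and $\zeta^{-j}=\chi(j)/a_j$. The numerator then becomes $\chi(i)\chi(j)\bigl(a_i^{-1}a_j^{-2}\cdot a_j-\ldots\bigr)$; concretely, $\zeta^{-j-2i}=\chi(j)\chi(i)^2/(a_ja_i^2)$, so the numerator is $\chi(i)\chi(j)\,(a_j-a_i)/(a_i^2a_j^2)$. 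The denominator is $(1-a_ia_j)\chi(i)\chi(j)/(a_ia_j)$. Hence
\[
 U_{ij}=\frac{a_j-a_i}{a_ia_j(1-a_ia_j)},
\]
and multiplying by $a_ia_j$ gives the claimed $K_{ij}$.

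The boundary row and column come out the same way. For $i=0$ and $j\ge1$, we have $U_{0j}=-\chi(j)\zeta^{-2j}/\zeta^{-j}=-\chi(j)\zeta^{-j}=-1/a_j$. Hence $K_{0j}=a_0a_jU_{0j}=-1$, which matches $(a_j-1)/(1-a_j)=-1$. The case $j=0$ follows by skew-symmetry, and $U_{00}=0$ because $\chi(0)=0$. Finally, $K_{ii}=0$ follows from the skew-symmetry of $U$ and hence of $K=GUG$.
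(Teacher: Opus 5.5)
Your proof is correct and is essentially the paper's direct computation. The paper gets distinctness and $a_ia_j\ne1$ in one stroke by squaring ($a_i^2=\zeta^{2i}=x_i$), while you split off $i=0$ and use $\pm\zeta^{k}\ne1$ for $0<k<p$; that difference is cosmetic.

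You should, however, delete the displayed identity $a_ia_j\bigl(\zeta^{-i-j}-\chi(i)\chi(j)\bigr)=\chi'(i)\chi'(j)-a_ia_j$, because it is false. The left side equals $\chi(i)\chi(j)(1-a_ia_j)$ for $i,j\ge1$, $\chi(j)$ for $i=0<j$, and $1$ for $i=j=0$, where your right side would give $0$. Delete also the unfinished expressions containing $\ldots$. Nothing depends on these lines, since the case-by-case denominators you actually use are correct: $\chi(i)\chi(j)(a_ia_j)^{-1}(1-a_ia_j)=\zeta^{-i-j}(1-a_ia_j)$, then $\zeta^{-j}$, and $1$.
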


\begin{proof}
Squaring gives $a_i^{2}=\zeta^{2i}=x_i$, valid for $i=0$ as well, so $a_i=a_j$
forces $x_i=x_j$ and hence $i=j$.  Likewise $a_ia_j=1$ implies
$\zeta^{2(i+j)}=1$, so $p\mid i+j$; as $0\le i+j\le2n=p-1$, only $i=j=0$
survives.

For the denominators, note that $\zeta^{-i-j}-\chi(i)\chi(j)
=\zeta^{-i-j}(1-a_ia_j)$ when $i,j\ge1$, which is then nonzero, while if $i=0$
or $j=0$ the denominator reduces to a power of $\zeta$ because $\chi(0)=0$.
The same computation delivers the entries: for $i,j\ge1$,
\[
 K_{ij}=a_ia_jU_{ij}
 =\frac{\chi(j)\zeta^{-i}-\chi(i)\zeta^{-j}}
 {\zeta^{-i-j}-\chi(i)\chi(j)},
\]
and multiplying numerator and denominator by $\zeta^{i+j}$ produces
$(a_j-a_i)/(1-a_ia_j)$.  For $i=0<j$ one finds $U_{0j}=-\chi(j)\zeta^{-j}$ and
hence $K_{0j}=-1$, which is again $(a_j-a_0)/(1-a_0a_j)$.  Finally $U_{ii}=0$
for every $i$, so $K$ has zero diagonal.
\end{proof}

Schur's Pfaffian identity~\cite{Schur1911} states that
\[
 \Pf\!\left[\frac{y_j-y_i}{y_j+y_i}\right]_{1\le i,j\le2M}
 =\prod_{1\le i<j\le 2M}\frac{y_j-y_i}{y_j+y_i}.
\]
Under the substitution $b_i=(y_i-1)/(y_i+1)$ one computes
\[
 b_j-b_i=\frac{2(y_j-y_i)}{(y_i+1)(y_j+1)},
 \qquad
 1-b_ib_j=\frac{2(y_i+y_j)}{(y_i+1)(y_j+1)},
\]
so that the quotient of the two is $(y_j-y_i)/(y_i+y_j)$.  Being a birational
change of variables, the substitution converts Schur's identity into
\begin{equation}\label{eq:schur}
 \Pf\!\left[\frac{b_j-b_i}{1-b_ib_j}\right]_{1\le i,j\le2M}
 =\prod_{1\le i<j\le 2M}\frac{b_j-b_i}{1-b_ib_j},
\end{equation}
an identity of rational functions in $b_1,\ldots,b_{2M}$, the diagonal entries
on the left being $0$.

The order $n+1$ of $K$ is even, so \cref{eq:schur} applies to the nodes
$a_0,\ldots,a_n$; by \cref{lem:K} every factor of the resulting product is
finite and nonzero, so
\begin{equation}\label{eq:pfK}
 \Pf K=\prod_{0\le i<j\le n}\frac{a_j-a_i}{1-a_ia_j}\ne0 .
\end{equation}
In particular $K$ is invertible.  Substituting $U=G^{-1}KG^{-1}$ into
\cref{eq:vsemirnov} and inverting,
\begin{equation}\label{eq:Winv}
 W=\lambda^{-1}V^{-1}D^{-1}GK^{-1}GD^{-1}V^{-1}.
\end{equation}

\section{A two-point identity for $K^{-1}$}\label{sec:kernel}

For a variable $z$ set
\[
 k(z)=\left(\frac{z-a_i}{1-a_iz}\right)_{i=0}^{n},
 \qquad
 P(z)=\prod_{i=0}^{n}\frac{z-a_i}{1-a_iz}.
\]

\begin{lemma}\label[lemma]{lem:kernel}
As an identity of rational functions in $z$ and $w$,
\begin{equation}\label{eq:kernel}
 k(z)^{\T}K^{-1}k(w)=\frac{w-z}{1-zw}\bigl(P(z)P(w)-1\bigr).
\end{equation}
\end{lemma}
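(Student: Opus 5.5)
The idea is to enlarge the node set by the two extra points $z$ and $w$ and apply Schur's identity \cref{eq:schur} to the resulting skew matrix of even order $n+3$. Let $\widetilde K$ be the skew-symmetric matrix of order $n+3$ on the ordered nodes $(a_0,\ldots,a_n,z,w)$, with entries $(b_j-b_i)/(1-b_ib_j)$. In block form,
\[
 \widetilde K=\begin{pmatrix}K&-k(z)&-k(w)\\ k(z)^{\T}&0&\frac{w-z}{1-zw}\\ k(w)^{\T}&\frac{z-w}{1-zw}&0\end{pmatrix}.
\]
Here the column of $\widetilde K$ attached to $z$ has entries $(z-a_i)/(1-a_iz)=k(z)_i$ in the rows indexed by the $a_i$. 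The corresponding row therefore carries $-k(z)_i$, since the $(z,a_i)$ entry is $(a_i-z)/(1-a_iz)$. I will fix this sign convention carefully; it is the only place where the bookkeeping can go wrong.

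The key step is the Schur-complement formula for Pfaffians. For a skew matrix with invertible skew block $K$ and a $2\times2$ skew complement,
\[
 \Pf\widetilde K=\Pf K\cdot\Pf\bigl(S\bigr),
\]
where $S$ is the $2\times2$ skew matrix
\[
 S=\begin{pmatrix}0&s\\-s&0\end{pmatrix},\qquad s=\frac{w-z}{1-zw}-\bigl(\text{row }z\bigr)K^{-1}\bigl(\text{column }w\bigr).
\]
With the signs above, row $z$ restricted to the $a$-block is $k(z)^{\T}$ and column $w$ is $k(w)$. The bilinear term is therefore $\pm\,k(z)^{\T}K^{-1}k(w)$, and I will recheck this sign directly against the displayed block matrix. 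This gives
\[
 s=\frac{\Pf\widetilde K}{\Pf K}.
\]

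Now I evaluate the ratio by \cref{eq:schur}, used as an identity of rational functions and specialized to the nodes $a_0,\ldots,a_n$, where \cref{eq:pfK} guarantees $\Pf K\ne0$. The product for $\widetilde K$ is the product for $K$ multiplied by three kinds of new factors. The factors pairing $a_i$ with $z$ give $(z-a_i)/(1-a_iz)$, whose product over $i$ is $P(z)$. The factors pairing $a_i$ with $w$ give $P(w)$. The single factor pairing $z$ with $w$ gives $(w-z)/(1-zw)$. Hence
\[
 \frac{\Pf\widetilde K}{\Pf K}=\frac{w-z}{1-zw}\,P(z)P(w).
\]
Equating this with the value of $s$ from the Schur complement yields $k(z)^{\T}K^{-1}k(w)=\frac{w-z}{1-zw}\bigl(P(z)P(w)-1\bigr)$, provided the sign of the bilinear term works out to $-$. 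The only genuine obstacle is this sign bookkeeping, namely the orientation of the entries and the Pfaffian Schur-complement convention.

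To guard against a sign error, I will check the result in the limit $w\to z$. Both sides must be antisymmetric under swapping $z$ and $w$, which is automatic since $K^{-1}$ is skew. As a further check, the formula must vanish when $z=a_i$ is substituted appropriately: $k(a_i)$ is the $i$-th column of $K$ up to sign, so $k(a_i)^{\T}K^{-1}k(w)=\pm k(w)_i$. On the right side, $P(a_i)=0$, leaving $-\frac{w-a_i}{1-a_iw}$, which is consistent with the minus sign.

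Finally, the identity holds as rational functions in $z$ and $w$. It is proved for generic $z,w$, where all the new denominators are nonzero, and it then holds everywhere by polynomial identity.
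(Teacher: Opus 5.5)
Your strategy is exactly the paper's: border $K$ by the nodes $z,w$ in that order, evaluate the bordered Pfaffian by Schur's product, and compare with the Pfaffian Schur complement. Your ratio $\frac{w-z}{1-zw}P(z)P(w)$ is correct.

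The gap is the sign of the bilinear term. That sign is the only point of the lemma not forced by this template, and your proposal leaves it open (``$\pm$'', ``provided the sign \ldots\ works out''). Where it does commit, it gets the sign wrong. Your verbal description of the entries is right: the $(a_i,z)$ entry is $k(z)_i$ and the $(z,a_i)$ entry is $-k(z)_i$. Your displayed matrix, however, negates both border blocks. The sentence ``row $z$ restricted to the $a$-block is $k(z)^{\T}$ and column $w$ is $k(w)$'' then takes the row of $z$ from the display and the column of $w$ from the text. With that mixture your formula gives $s=\frac{w-z}{1-zw}-k(z)^{\T}K^{-1}k(w)$. Equating with $\frac{w-z}{1-zw}P(z)P(w)$ yields $k(z)^{\T}K^{-1}k(w)=\frac{w-z}{1-zw}\bigl(1-P(z)P(w)\bigr)$, the negative of the lemma. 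Your safeguards cannot catch this. Antisymmetry under $z\leftrightarrow w$ holds for either sign, and the $z=a_i$ test, run with ``up to sign'' on the left-hand side, decides nothing.

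To close the gap, keep one convention throughout. The true bordered matrix is $\begin{pmatrix}K&k(z)&k(w)\\-k(z)^{\T}&0&c\\-k(w)^{\T}&-c&0\end{pmatrix}$ with $c=\frac{w-z}{1-zw}$. Its complement entry is $c-\bigl(-k(z)^{\T}\bigr)K^{-1}k(w)=c+k(z)^{\T}K^{-1}k(w)$, and equating this with $cP(z)P(w)$ gives the lemma.

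Your display also works if used consistently. It is the congruence of this matrix by $\diag(I,-1,-1)$, which has determinant $1$ and so preserves the Pfaffian. Negating the border block $B=\bigl(k(z),k(w)\bigr)$ leaves $B^{\T}K^{-1}B$ unchanged.

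Alternatively, make your $z=a_i$ test exact, since it fixes the sign by itself. For $1\le i\le n$ one has $k(a_i)=Ke_i$ on the nose; the $i$th entry is $0$ because $a_i^{2}\ne1$. Hence $k(a_i)^{\T}K^{-1}k(w)=e_i^{\T}K^{\T}K^{-1}k(w)=-k(w)_i$. The right-hand side also equals $-k(w)_i$, because $P(a_i)=0$. Here $i=0$ must be excluded, since $P(1)=-1$ and $k(1)=\one-2e_0\ne Ke_0$.
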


\begin{proof}
Adjoin $z$ and $w$ to the nodes $a_0,\ldots,a_n$, in that order.  The
associated Schur matrix
\[
 \mathcal K=
 \begin{pmatrix}
 K&k(z)&k(w)\\
 -k(z)^{\T}&0&\dfrac{w-z}{1-zw}\\[2pt]
 -k(w)^{\T}&-\dfrac{w-z}{1-zw}&0
 \end{pmatrix}
\]
has even order $n+3$, so \cref{eq:schur,eq:pfK} give
\[
 \frac{\Pf\mathcal K}{\Pf K}=\frac{w-z}{1-zw}P(z)P(w).
\]
On the other hand, an invertible skew-symmetric $A$ admits a Pfaffian
Schur-complement formula,
\begin{equation}\label{eq:pfschur}
 \Pf\begin{pmatrix}A&B\\-B^{\T}&D_0\end{pmatrix}
 =\Pf(A)\,\Pf\bigl(D_0+B^{\T}A^{-1}B\bigr).
\end{equation}
Indeed, with $S=\begin{pmatrix}I&0\\B^{\T}A^{-1}&I\end{pmatrix}$ one checks,
using $(A^{-1})^{\T}=-A^{-1}$, that
\[
 S\begin{pmatrix}A&B\\-B^{\T}&D_0\end{pmatrix}S^{\T}
 =\begin{pmatrix}A&0\\0&D_0+B^{\T}A^{-1}B\end{pmatrix},
\]
and it remains to apply $\Pf(SMS^{\T})=\det(S)\Pf(M)$ with $\det S=1$.  Take
$A=K$ and $B=(k(z),k(w))$ in \cref{eq:pfschur}.  The diagonal of
$B^{\T}K^{-1}B$ vanishes because $K^{-1}$ is skew-symmetric, so
\[
 \frac{\Pf\mathcal K}{\Pf K}=\frac{w-z}{1-zw}+k(z)^{\T}K^{-1}k(w),
\]
and comparing the two evaluations proves \cref{eq:kernel}.
\end{proof}

The zeroth entry of $k(z)$ is the constant $-1$, and $n+1$ is even; hence,
with $e_0$ the first standard basis vector,
\[
 k(-1)=-\one,\quad P(-1)=1,
 \qquad
 k(1)=\one-2e_0,\quad P(1)=-1 .
\]
Substituting $w=-1$ and then $w=1$ in \cref{eq:kernel} and combining the two
results through $e_0=-\tfrac12\bigl(k(-1)+k(1)\bigr)$, we obtain
\begin{equation}\label{eq:e0}
 k(z)^{\T}K^{-1}e_0=P(z).
\end{equation}
Write
\[
 \Delta=\prod_{i=0}^{n}a_i,
 \qquad
 v_\mu=(a_i^{\mu})_{i=0}^{n}\quad(\mu\in\mathbb Z).
\]
Again because $n+1$ is even, $k(\infty)=-v_{-1}$ and $P(\infty)=\Delta^{-1}$,
so evaluating \cref{eq:kernel} at $z=\infty$ gives
\begin{equation}\label{eq:infty}
 v_{-1}^{\T}K^{-1}k(w)=\frac{1-\Delta^{-1}P(w)}{w}.
\end{equation}

We shall need the first Taylor coefficients of $P$ at the origin.  Since $n+1$
is even,
\[
 P(z)=\Delta R(z),
 \qquad
 R(z)=\prod_{i=0}^{n}\frac{1-a_i^{-1}z}{1-a_iz},
\]
and with $\delta_\mu=\sum_{i=0}^{n}(a_i^{\mu}-a_i^{-\mu})$ one has
$\log R(z)=\sum_{\mu\ge1}\delta_\mu z^{\mu}/\mu$ as a formal power series.
Exponentiating and writing $R(z)=1+\rho_1z+\rho_2z^{2}+\rho_3z^{3}+O(z^{4})$,
\begin{equation}\label{eq:rho}
 \rho_1=\delta_1,
 \qquad
 \rho_2=\frac{\delta_1^{2}+\delta_2}{2},
 \qquad
 \rho_3=\frac{\delta_1^{3}+3\delta_1\delta_2+2\delta_3}{6}.
\end{equation}

The three power sums involved are classical.  The term $i=0$ contributes
nothing to $\delta_\mu$, and pairing the residues $i$ and $-i$ modulo $p$
identifies what is left as Gauss sums:
\begin{equation}\label{eq:delta13}
 \delta_1=\sum_{i=1}^{n}\chi(i)(\zeta^{i}-\zeta^{-i})=\tau,
 \qquad
 \delta_3=\sum_{i=1}^{n}\chi(i)(\zeta^{3i}-\zeta^{-3i})=\tau(3)=\chi(3)\tau .
\end{equation}
For $\delta_2$, two geometric series suffice.  Putting
$S=\sum_{i=0}^{n}a_i^{2}=\sum_{i=0}^{n}\zeta^{2i}$, we get
$S=(1-\zeta^{p+1})/(1-\zeta^{2})=1/(1+\zeta)$ and, in the same way,
$\sum_{i=0}^{n}a_i^{-2}=\zeta/(1+\zeta)$, so that
$\delta_2=(1-\zeta)/(1+\zeta)$.  Only one combination of $S$ and $\delta_2$
occurs below, and it collapses:
\begin{equation}\label{eq:half}
 1-S+\frac{\delta_2}{2}=\frac12 .
\end{equation}

\section{Evaluation of $d$, $r$ and $\gamma$}\label{sec:evaluation}

The factorization \cref{eq:Winv} expresses $W$ through $K^{-1}$ once the
columns of $V^{-1}D^{-1}$ have been identified.  Set $b_j=D^{-1}V^{-1}e_j$ for
$0\le j\le n$.

\begin{lemma}\label[lemma]{lem:lagrange}
$Gb_n=v_1$, \quad $Gb_{n-1}=v_3-Sv_1$ \quad and \quad
$Gb_0=-\Delta^{2}v_{-1}$.
\end{lemma}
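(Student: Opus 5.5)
The plan is to identify the vectors $b_j=D^{-1}V^{-1}e_j$ entrywise through Lagrange interpolation on the nodes $x_0,\ldots,x_n$. Then I would multiply by $G=\diag(a_i)$, using $a_i^{2}=x_i$ from the proof of \cref{lem:K}. Write $g(X)=\sum_{k=0}^{n+1}c_kX^{k}$ with $c_{n+1}=1$.

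\emph{Step 1: the entries of $V^{-1}$.} Since $V_{ij}=x_i^{\,j}$, the $i$-th column of $V^{-1}$ is the coefficient vector of the Lagrange basis polynomial
\[
 L_i(X)=\frac{g(X)}{(X-x_i)g'(x_i)} .
\]
Equivalently, $(V^{-1})_{ji}=[X^{j}]L_i(X)$. Because $V$ is symmetric, so is $V^{-1}$, and therefore $(V^{-1}e_j)_i=(V^{-1})_{ij}=[X^{j}]L_i(X)$. Multiplying by $D^{-1}=\diag(g'(x_i))$ cancels the denominator $g'(x_i)$. This gives the uniform formula
\[
 (b_j)_i=[X^{j}]\,\frac{g(X)}{X-x_i}=\sum_{k=j+1}^{n+1}c_k\,x_i^{\,k-j-1},
\]
where the right-hand side is the usual synthetic-division expression for the quotient $g(X)/(X-x)$ evaluated at $x=x_i$.

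\emph{Step 2: the three columns $j=n,\,n-1,\,0$.}
\begin{itemize}
\item For $j=n$, only $k=n+1$ contributes, so $(b_n)_i=1$ and $Gb_n=(a_i)_i=v_1$.
\item For $j=n-1$, we get $(b_{n-1})_i=c_n+x_i$. Here $c_n=-\sum_i x_i=-S$, since $x_i=\zeta^{2i}$ and $S=\sum_{i=0}^{n}\zeta^{2i}$. Hence $(Gb_{n-1})_i=a_i(a_i^{2}-S)$, which is exactly $v_3-Sv_1$.
\item For $j=0$, we have $\sum_{k\ge1}c_kx_i^{k-1}=(g(x_i)-c_0)/x_i=-c_0/x_i$ because $g(x_i)=0$. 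Since $n+1$ is even, $c_0=g(0)=\prod_i(-x_i)=\prod_i x_i=\prod_i a_i^{2}=\Delta^{2}$. So $(b_0)_i=-\Delta^{2}a_i^{-2}$, and multiplying by $a_i$ gives $Gb_0=-\Delta^{2}v_{-1}$.
\end{itemize}

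\emph{Main obstacle.} The only delicate point is the index bookkeeping in Step 1: keeping track of which index of $V^{-1}$ is the node and which is the power, and the fact that $D^{-1}$ multiplies on the left, i.e.\ rowwise by node. The symmetry of $V$ removes the ambiguity, and I would double-check the formula on $n=1$ before trusting it. Everything after that is the elementary coefficient extraction above, together with the parity fact that $n+1$ is even, which fixes the sign of $c_0$.
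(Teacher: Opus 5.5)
Your proposal is correct and follows the paper's proof: both identify $(b_j)_i=[X^{j}]\,g(X)/(X-x_i)$ via the Lagrange basis and the symmetry of $V^{-1}$, and then read off the three coefficients. The only cosmetic difference is that you extract them through the synthetic-division formula and $g(0)=\Delta^{2}$ (using that $n+1$ is even), whereas the paper applies Vieta's formulas to $\prod_{k\ne i}(X-x_k)$ (using that $n$ is odd).
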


\begin{proof}
The Lagrange polynomial $\ell_i(X)=g(X)/\bigl(g'(x_i)(X-x_i)\bigr)$ takes the
value $1$ at $x_i$ and vanishes at the other nodes.  As $(Vc)_i$ evaluates at
$x_i$ the polynomial with coefficient vector $c$, the coefficient vector of
$\ell_i$ is $V^{-1}e_i$.  Both $V$ and hence $V^{-1}$ are symmetric, so
\[
 (b_j)_i=g'(x_i)(V^{-1})_{ij}=g'(x_i)(V^{-1})_{ji}
 =[X^{j}]\,\frac{g(X)}{X-x_i} .
\]
Now $g(X)/(X-x_i)=\prod_{k\ne i}(X-x_k)$ is monic of degree $n$.  Its leading
coefficient is $1$, so $b_n=\one$ and $Gb_n=v_1$.  Its coefficient of
$X^{n-1}$ is $-\sum_{k\ne i}x_k=x_i-S=a_i^{2}-S$, and multiplying by $a_i$
gives $a_i^{3}-Sa_i$, whence $Gb_{n-1}=v_3-Sv_1$.  Its constant term is
$(-1)^{n}\prod_{k\ne i}x_k=-\Delta^{2}/x_i$, because $n$ is odd and
$\prod_{k}x_k=\prod_{k}a_k^{2}=\Delta^{2}$; multiplying by $a_i$ gives
$-\Delta^{2}a_i^{-1}$, that is, $Gb_0=-\Delta^{2}v_{-1}$.
\end{proof}

Since $V^{-1}$ is symmetric and $D$ is diagonal, we have
$(V^{-1}D^{-1})^{\T}=D^{-1}V^{-1}$, so \cref{eq:Winv} reads
\begin{equation}\label{eq:Wentry}
 W_{jk}=\lambda^{-1}(Gb_j)^{\T}K^{-1}(Gb_k).
\end{equation}
Moreover $Ve_0=\one$, so $V^{-1}\one=e_0$ and $GD^{-1}V^{-1}\one=he_0$, where
\[
 h=g'(1)=\prod_{i=1}^{n}(1-\zeta^{2i}),
\]
and \cref{eq:Winv} therefore also gives
\begin{equation}\label{eq:gammaentry}
 \gamma_j=(W\one)_j=\frac h\lambda\,(Gb_j)^{\T}K^{-1}e_0 .
\end{equation}

Both \cref{eq:Wentry,eq:gammaentry} apply a linear functional to the vectors
of \cref{lem:lagrange}, and those vectors are precisely the ones produced by
expanding $k(z)$.

\begin{lemma}\label[lemma]{lem:extract}
Let $\phi$ be a linear functional on $\mathbb C^{n+1}$ and put
$f(z)=\phi(k(z))$.  Then $\phi(v_1)=-f(0)$, $\phi(v_{-1})=-f(\infty)$ and
$\phi(v_3-Sv_1)=-[z^{2}]f-(1-S)f(0)$.  In particular, if $f(0)\ne0$ then
\[
 \frac{\phi(v_3-Sv_1)}{\phi(v_1)}=1-S+\frac{[z^{2}]f}{f(0)} .
\]
\end{lemma}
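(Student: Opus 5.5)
The plan is to expand each coordinate of $k(z)$ directly, at $z=0$ and at $z=\infty$, and then use linearity of $\phi$ to move the expansions inside $\phi$. Nothing from the earlier sections is needed except that every $a_i$ is nonzero. This holds since $a_0=1$ and $|a_i|=1$ for $i\ge1$, so $v_{-1}$ makes sense and each coordinate of $k(z)$ is a rational function regular at $0$ and at $\infty$.

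\emph{Expansion at $0$.} For each $i$ I expand the $i$-th coordinate as a power series:
\[
 \frac{z-a_i}{1-a_iz}=(z-a_i)\sum_{\mu\ge0}a_i^{\mu}z^{\mu}
 =-a_i+(1-a_i^{2})z+(a_i-a_i^{3})z^{2}+O(z^{3}).
\]
As vectors this reads $k(z)=-v_1+(\one-v_2)z+(v_1-v_3)z^{2}+O(z^{3})$. Since $\phi$ is linear and the space is finite-dimensional, $\phi$ commutes with taking Taylor coefficients. This gives $f(0)=-\phi(v_1)$, which is the first claim, and $[z^{2}]f=\phi(v_1)-\phi(v_3)$.

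\emph{Expansion at $\infty$.} Letting $z\to\infty$ in each coordinate gives $(z-a_i)/(1-a_iz)\to-a_i^{-1}$, so $k(\infty)=-v_{-1}$. This is consistent with the remark before \cref{eq:infty}, including the case $i=0$, where the limit is $-1=-a_0^{-1}$. Hence $f(\infty)=-\phi(v_{-1})$, which is the second claim.

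\emph{The combination.} I solve the Taylor-coefficient relations for $\phi(v_3)$, giving $\phi(v_3)=\phi(v_1)-[z^{2}]f$. Substituting $\phi(v_1)=-f(0)$ then yields
\[
 \phi(v_3-Sv_1)=(1-S)\phi(v_1)-[z^{2}]f=-[z^{2}]f-(1-S)f(0).
\]
If $f(0)\ne0$, I divide by $\phi(v_1)=-f(0)$ to get the stated ratio $1-S+[z^{2}]f/f(0)$.

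There is no real obstacle here. The only points that need care are the sign bookkeeping in the $z^{2}$ coefficient and checking that the node $a_0=1$ causes no pole at $z=0$ or at $z=\infty$. The pole of that coordinate sits at $z=1$, so it does not interfere with either expansion.
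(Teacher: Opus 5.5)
Your proof is correct and matches the paper's argument: expand $k(z)=-v_1+z(v_0-v_2)+z^{2}(v_1-v_3)+O(z^{3})$ at the origin, use $k(\infty)=-v_{-1}$, apply $\phi$ coefficientwise, and eliminate $\phi(v_3)$. One small slip: the $a_0=1$ coordinate $(z-1)/(1-z)$ is identically $-1$, so it has no pole at $z=1$ at all, but this does not affect the argument.
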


\begin{proof}
Expanding each entry of $k(z)$ at the origin,
\[
 k(z)=-v_1+z(v_0-v_2)+z^{2}(v_1-v_3)+O(z^{3}),
\]
so $f(0)=-\phi(v_1)$ and $[z^{2}]f=\phi(v_1-v_3)$.  Eliminating $\phi(v_3)$
between these two relations gives
$\phi(v_3-Sv_1)=-f(0)-[z^{2}]f+Sf(0)$, which is the third formula, and
$\phi(v_{-1})=-f(\infty)$ because $k(\infty)=-v_{-1}$.
\end{proof}

We apply \cref{lem:extract} to the two functionals
\[
 \phi_1(y)=v_{-1}^{\T}K^{-1}y,\qquad \phi_2(y)=y^{\T}K^{-1}e_0,
\]
whose generating functions \cref{lem:kernel} has already produced, in
\cref{eq:e0,eq:infty}:
\begin{align*}
 f_1(z)&=\frac{1-\Delta^{-1}P(z)}{z}
   =-\rho_1-\rho_2z-\rho_3z^{2}+O(z^{3}),\\
 f_2(z)&=P(z)=\Delta\bigl(1+\rho_1z+\rho_2z^{2}+O(z^{3})\bigr).
\end{align*}
Combining \cref{lem:lagrange,lem:extract} with
\cref{eq:Wentry,eq:gammaentry}, and using $\rho_1=\delta_1=\tau$ from
\cref{eq:delta13}, the quantities we are after become
\begin{align}
 d&=\lambda^{-1}(Gb_0)^{\T}K^{-1}(Gb_n)
   =-\lambda^{-1}\Delta^{2}\phi_1(v_1)
   =-\frac{\Delta^{2}\tau}{\lambda},
 \label{eq:d}\\
 \frac rd&=\frac{\phi_1(v_3-Sv_1)}{\phi_1(v_1)}
   =1-S+\frac{\rho_3}{\rho_1},
 \label{eq:rd}\\
 \gamma_n&=\frac h\lambda\,\phi_2(v_1)=-\frac{h\Delta}{\lambda},
 \label{eq:gamman}\\
 \frac{\gamma_0}{\gamma_n}&=\frac{-\Delta^{2}\phi_2(v_{-1})}{\phi_2(v_1)}
   =\frac{-\Delta^{2}\cdot(-\Delta^{-1})}{-\Delta}=-1,\\
 \frac{\gamma_{n-1}}{\gamma_n}&=\frac{\phi_2(v_3-Sv_1)}{\phi_2(v_1)}
   =1-S+\rho_2 .
 \label{eq:gammanm1}
\end{align}
Thus $\gamma_0=-\gamma_n$, while \cref{eq:rho,eq:half,eq:gauss} turn
\cref{eq:gammanm1} into
\[
 \frac{\gamma_{n-1}}{\gamma_n}
 =1-S+\frac{\delta_2}{2}+\frac{\delta_1^{2}}{2}
 =\frac12+\frac{\tau^{2}}{2}
 =\frac{1-p}{2}=-n,
\]
that is, $\gamma_{n-1}=-n\gamma_n$.  The same three ingredients applied to
\cref{eq:rd} give
\[
 \frac rd
 =1-S+\frac{\delta_2}{2}+\frac{\delta_1^{2}}{6}+\frac{\delta_3}{3\delta_1}
 =\frac12-\frac p6+\frac{\chi(3)}{3}.
\]
Here quadratic reciprocity enters, in the form
$\chi(3)\bigl(\tfrac p3\bigr)=(-1)^{(p-1)/2}=(-1)^{n}=-1$.  If $p=6m+1$ then
$\bigl(\tfrac p3\bigr)=1$, hence $\chi(3)=-1$; if $p=6m+5$ then
$\bigl(\tfrac p3\bigr)=\bigl(\tfrac 23\bigr)=-1$, hence $\chi(3)=1$.  The two
cases give
\[
 \frac12-\frac{6m+1}{6}-\frac13=-m
 \qquad\text{and}\qquad
 \frac12-\frac{6m+5}{6}+\frac13=-m
\]
respectively, so that $r=-md$ either way.

\begin{remark}
The $3$ in \cref{eq:main} enters at exactly one place.  The cubic coefficient
$\rho_3$ contains the power sum $\delta_3$, which is the Gauss sum
$\tau(3)=\chi(3)\tau$, and the $3$ in the resulting term
$\delta_3/(3\delta_1)$ survives into $r/d$.  Quadratic reciprocity then
converts $\chi(3)$ into the residue of $p$ modulo $6$, leaving $m=\floor{p/6}$.
\end{remark}

Only the two squares remain.  Comparing \cref{eq:d,eq:gamman},
\[
 \frac{\gamma_n}{d}=\frac{-h\Delta/\lambda}{-\Delta^{2}\tau/\lambda}
 =\frac{h}{\Delta\tau},
\]
so it suffices to know $h^{2}$.  Evaluating
$\prod_{r=1}^{p-1}(X-\zeta^{r})=1+X+\cdots+X^{p-1}$ at $X=1$ gives
$p=\prod_{r=1}^{p-1}(1-\zeta^{r})$, and since the classes $\pm2i$ with
$1\le i\le n$ run exactly once through the nonzero residues modulo $p$, this
product factors as
\[
 p=\prod_{i=1}^{n}(1-\zeta^{2i})\prod_{i=1}^{n}(1-\zeta^{-2i})
 =(-1)^{n}\zeta^{-n(n+1)}h^{2}
 =-\zeta^{-n(n+1)}h^{2},
\]
the middle step using $1-\zeta^{-2i}=-\zeta^{-2i}(1-\zeta^{2i})$ and
$2(1+2+\cdots+n)=n(n+1)$.  Since
$\Delta^{2}=\prod_{i=0}^{n}a_i^{2}=\zeta^{n(n+1)}$, this says
$h^{2}=-p\Delta^{2}=\tau^{2}\Delta^{2}$ by \cref{eq:gauss}, whence
\[
 \left(\frac{\gamma_n}{d}\right)^{2}=\frac{h^{2}}{\Delta^{2}\tau^{2}}=1,
 \qquad\text{so that}\qquad \gamma_n^{2}=d^{2}.
\]
There remains $d^{2}$ itself.  From $2n(n+1)=n(p+1)\equiv n\pmod p$ we get
$\Delta^{4}=\zeta^{n}$, while $\tau(2)=\chi(2)\tau$ and
$-(p+1)/2\equiv n\pmod p$ give
$\lambda^{2}=\tau(2)^{2}\zeta^{-(p+1)/2}=-p\zeta^{n}$.  Hence, by
\cref{eq:d,eq:gauss},
\[
 d^{2}=\frac{\Delta^{4}\tau^{2}}{\lambda^{2}}
 =\frac{\zeta^{n}\cdot(-p)}{-p\zeta^{n}}=1 .
\]
This completes the proof of \cref{prop:inverse}, and with it, through
\cref{prop:reduction}, the proof of \cref{thm:main}. \qed

\backmatter

\section*{Declarations}

\noindent\textbf{Use of generative artificial intelligence.}
The proof given here, together with a first draft of this manuscript, was
produced by OpenAI's ChatGPT.  The authors verified every mathematical step
independently, rewrote the exposition, and take full responsibility for the
contents of the paper.

\bibliography{references}

\end{document}